\documentclass[leqno,a4paper]{article}
\usepackage{amsmath,
amsthm,amssymb}
\usepackage{eufrak}

\newcommand{\mT}{\mathcal{T}}

\newcommand{\R}{{\mathbb R}}
\newcommand{\oo}{{\underline{\omega}}}
\newcommand{\on}{{\underline{n}}}
\newcommand{\oee}{{\underline{e}}}
\newcommand{\ovv}{{\underline{v}}}

\newcommand{\ut}{{\underline{t}}}
\newcommand{\utau}{{\underline{\tau}}}

\newcommand{\pa}{{\partial}}

\newcommand{\Om}{{\Omega}}
\newcommand{\Ga}{{\Gamma}}
\newcommand{\vp}{{\varphi}}

\newcommand{\te}{{\theta}}

\newcommand{\ou}{{\underline{u}}}

\newcommand{\n}{{\underline{n}}}

\newcommand{\be}{\begin{equation}}
\newcommand{\ee}{\end{equation}}
\newcommand{\ba}{\begin{array}}
\newcommand{\ea}{\end{array}}
\def\vs1{\vspace{1ex}}

\newtheorem{theorem}{Theorem}[section]

\newtheorem{remark}{Remark}[section]

\newtheorem{problem}{Problem}[section]
\numberwithin{equation}{section}
\begin{document}
\title{\bf\normalsize THE 3-D INVISCID LIMIT RESULT UNDER SLIP BOUNDARY CONDITIONS. A NEGATIVE
ANSWER}
\author{by H.~Beir\~ao da Veiga and F.~Crispo}
\date{}
\maketitle \textit{\phantom{aaaaaaaaaaaaaaaaaaaaaaaaaa}}%

\begin{abstract}
We show that, \emph{in general}, the solutions to the
initial-boundary value problem for the Navier-Stokes equations under
a widely adopted Navier-type slip boundary condition do not
converge, as the viscosity goes to zero (in any arbitrarily small
neighborhood of the initial time), to the solution of the Euler
equations under the classical zero-flux boundary condition, and same
smooth initial data. Convergence does not hold with respect to any
space-topology which is sufficiently strong as to imply that the
solution to the Euler equations inherits the complete slip type
boundary condition (see the Theorem \ref{theocorol} below). In our
counter-example $\,\Om\,$ is a sphere, and the initial data may be
infinitely differentiable. The crucial point here is that the
boundary is not flat. In fact (see \cite{bvcrgr}), if $\,\Om
=\,\R^3_+\,,$ convergence holds in $C([0,T]; W^{k,p}(\R^3_+))$, for
arbitrarily large $k$ and $p$. For this reason, the negative answer
given here was not expected.
\end{abstract}

\bibliographystyle{amsplain}

\section{Introduction}

In some recent papers, see \cite{bvcr}, \cite{bvcr2}, \cite{bvcrgr},
we have considered the problem of the strong convergence up to the
boundary, as $\nu \rightarrow \,0\,,$ of the solutions $\,u^\nu\,$
of the Navier-Stokes equations in the cylinder $\Omega\times (0,T)$
\begin{equation}
\left\{
\begin{array}{l}
\partial_t\,\ou^\nu +\,(\ou^\nu \cdot\,\nabla)\,\ou^\nu -\,\nu\,\Delta\,\ou^\nu +\,\nabla\,\pi=\,0,\\
{div}\,\ou^\nu =\,0\,,\\
\ou^\nu(0)=\,\ou_0\,,
\end{array}
\right. \label{1;4}
\end{equation}
under the slip boundary conditions at $\partial\Omega\times (0,T)$
\begin{equation}
\left\{
\begin{array}{l}
\ou^\nu\cdot \on=\,0,\\
\oo^\nu \times \,\on=\,0\,,
\end{array}
\right. \label{bcns}
\end{equation}
where $\,\oo=\, curl \,\ou\,$, to the solution $u$ of the Euler
equations
\begin{equation}
\left\{
\begin{array}{l}
\partial_t\,\ou +\,(\ou\cdot\,\nabla)\,\ou+\,\nabla\,\pi=\,0,\\
{div}\,\ou=\,0\,,\\
\ou(0)=\,\ou_0\,,
\end{array}
\right. \label{1;4e}
\end{equation}
under the zero flux boundary condition
\begin{equation}
\ou\cdot\,\n=\,0\,.%
\label{bceu}
\end{equation}
The domain $\Om\,$ is an open set in $\R^3\,$ locally situated on
one side of its boundary $\Ga$, and $\on=\,(n_1,\,n_2,\,n_3)\,$ is
the unit outward normal to $\Ga$. We have showed that strong
convergence holds provided that the boundary is flat. In particular,
in the half-space case  we proved \cite{bvcrgr} that if
the initial data are in $W^{k,p}(\R^3_+)$, then convergence holds
in $C([0,T]; W^{k,p}(\R^3_+))$, for arbitrarily large $k$ and $p$.
Moreover, a minimal set of independent, necessary and sufficient,
compatibility conditions on $\,\Ga\,$ at $\,t=\,0\,$ is displayed.
These conditions appear only if $k\geq 4$.%

\vspace{0.2cm}

The natural next step is  to study if and how the above results
continue to hold in the presence of non-flat boundaries. As a matter
of fact, in the two-dimensional case the answer turns out to be
positive; see, for instance, \cite{clopeau}. In the three
dimensional case, the strong inviscid limit appears, instead, to be
a much more complicated issue and, so far, an open problem; see
\cite{bvcr} for a quite complete discussion on this
problem, and for proofs of related useful equations.\par%
In the recent paper \cite{xin} an interesting new approach to the
problem is introduced. Notwithstanding, the method of proof only
fully works if the boundary is flat. This fact was pointed out in
the subsequent papers \cite{bvcr} and \cite{bvcr2} where it was
emphasized that the non-flat boundary problem remains still
unsettled; for a review on these results see also \cite{CRParma}.

\vspace{0.2cm}

Objective of this note is to show that a strong inviscid limit result, in the
presence of non-flat boundaries, is false in general. Roughly
speaking by ``strong'' we mean that it is taken in function spaces such that all
the derivatives that appear in the equations, including the boundary
conditions, are integrable. In particular the result is false in
general,  when $\Omega$ is the unit sphere, and for
$C^{\infty}(\overline{\Om})\,$ divergence free initial data which
satisfies the slip boundary conditions \eqref{bcns}. For instance,
as $\nu$ tends to zero, the solutions to the Navier-Stokes equations
do not converge to the solution of the Euler equations in $L^1(0,
t_0; W^{s,q})$, for any arbitrarily small $\,t_0>\,0\,$, any $q\geq
1\,,$ and any $s>\,1+\,\frac1q $. Note that the above unique
solution to the Euler equations is infinitely differentiable, and
the above solutions to the Navier-Stokes equations are ``smooth''.%

\vspace{0.2cm}

A $W^{2,\,p}$ vanishing viscosity limit result in general domains
was recently claimed  in \cite{spirito}, Theorem 1.1. In the first
part of this preprint, the authors review methods and arguments
previously introduced and developed in references \cite{bvcr} and
\cite{bvcr2}. After this review, the authors go to the proof of the
main result, their Theorem 1.1. In doing this, they partially appeal
to some general ideas developed in a sequence of papers by one of
us, introduced to study sharp singular limit problems. In fact, this
approach in the present context seems to us a good choice. Actually,
the layout of the paper is convincing. Unfortunately, the final
result is incompatible with the counter-example presented below.

\vspace{0.2cm}

\begin{remark}
\rm{On flat portions of the boundary, the slip boundary conditions
coincide with the classical Navier boundary conditions
\begin{equation}
\left\{
\begin{array}{l}
\ou\cdot \n=\,0,\\
\ut \cdot \,\utau=\,0\,,
\end{array}
\right. \label{boundary-ho}
\end{equation}
where $\,\utau\,$ stands for any arbitrary unit tangential vector.
Here $\,\ut\,$ is the stress vector defined by $\,\ut
=\,\mT\cdot\,\n\,,$ where the stress tensor $\,\mT\,$ is defined by
\begin{equation*}
\mT =-\pi\,I+\,\frac{\nu}{2} \,(\nabla \ou+\nabla \ou^T)\,.
\end{equation*}
These conditions were introduced by Navier in 1823 and derived by
Maxwell in 1879 from the kinetic theory of gases. In the general
case
\begin{equation}
\ut \cdot \,\utau=\,\frac{\nu}{2} \,(\oo \times \,\n) \cdot \,\utau
\,-\,\nu \,\mathcal{K}_{\tau}\, \ou \cdot \,\utau\,,%
\label{parec}
\end{equation}
where $\,\mathcal{K}_{\tau}\,$ is the principal curvature in the
$\,\utau\,$ direction, positive if the corresponding center of
curvature lies inside $\,\Om\,$.\par%
Note that our counter-example does not exclude that strong
vanishing results hold under the Navier boundary conditions
in the non-flat boundary case.}%
\label{tolosa}
\end{remark}

\vspace{0.2cm}

We end the introduction by stating the following two theorems.
\begin{theorem}
Let $\Omega=\,\{x:\,|\,x\,|<\,1\,\}\,,$ be the $3$-dimensional
unitary sphere. There is an explicit family (see the Theorem
\ref{contra}) of $C^{\infty}(\overline{\Om})\,$, divergence free
initial data $\,\underline u_0\,$, which satisfies the slip boundary
conditions \eqref{bcns}, and such that the following holds. Given an
element $\,\underline u_0\,$ belonging to the above family, there
exists a $\,t_0
>\,0\,$ such that the corresponding (unique, indefinitely differentiable) local solution
$\,\underline u(t)\,$ to the Euler equations \eqref{1;4e},
\eqref{bceu} does not
satisfy the boundary condition $\oo \times \,\on=\,0 $, for any $\,t \in (0,\,t_0]\,.$%
\label{theointro}
\end{theorem}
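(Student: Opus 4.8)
The plan is to produce the counter-example by explicit calculation on the sphere, exploiting the fact that the obstruction to the slip condition $\oo\times\on=0$ being preserved by the Euler flow comes entirely from the curvature of $\Ga$. First I would recall (or re-derive) the evolution equation satisfied by the vorticity $\oo$ along the Euler flow: from \eqref{1;4e} one has the classical vorticity equation $\pa_t\oo+(\ou\cdot\na)\oo=(\oo\cdot\na)\ou$, and the tangential trace of this relation on $\Ga$ can be combined with the boundary identity \eqref{parec} (equivalently, with the geometric relation between $\oo\times\on$ and the second fundamental form acting on the tangential velocity) to obtain, at $t=0$, a formula for $\frac{d}{dt}\big|_{t=0}(\oo\times\on)$ purely in terms of the initial data $\ou_0$, the normal $\on$, and the curvature tensor of $\Ga$. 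On the unit sphere $\cK_\tau\equiv 1$ in every tangential direction, so this formula simplifies drastically and the curvature term cannot be made to vanish for a generic choice of $\ou_0$.

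The key steps, in order, would be: (i) choose a convenient explicit divergence-free field $\ou_0\in C^\infty(\overline\Om)$ satisfying \eqref{bcns}, for instance built from a small number of vector spherical harmonics so that $\oo_0\times\on=0$ holds identically on $\Ga$ by construction but $(\oo_0\cdot\na)\ou_0$ and the advection term have a controllable tangential trace on $\Ga$; (ii) verify that the local-in-time Euler solution $\ou(t)$ exists, is unique and $C^\infty$ up to the boundary for a short time $t_0>0$ — this is standard for smooth data on a smooth bounded domain, so it can be quoted; (iii) compute $\psi(t):=\oo(t)\times\on$ restricted to $\Ga$, show $\psi(0)=0$, and compute $\psi'(0)$ using the vorticity equation together with \eqref{parec} specialised to the sphere, reducing it to an algebraic expression in $\ou_0$ and the curvature $\cK_\tau=1$; (iv) exhibit at least one point of $\Ga$ and one tangential direction at which $\psi'(0)\ne 0$, for the chosen family of initial data; (v) conclude by continuity that $\psi(t)\ne 0$ on $\Ga$ for all sufficiently small $t>0$, hence $\oo(t)\times\on\not\equiv 0$ on $(0,t_0]$ — possibly after shrinking $t_0$ — which is precisely the assertion of Theorem \ref{theointro}. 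The explicit family is then recorded (pointing forward to Theorem \ref{contra}).

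The main obstacle I expect is step (iii): one must be careful that the naive differentiation of $\oo(t)\times\on$ on $\Ga$ is legitimate and that the resulting boundary identity is the correct one. The subtlety is that $\oo\times\on$ is a quantity defined on the moving-trace level, and the boundary condition \eqref{parec} mixes the vorticity term with the curvature term $\nu\cK_\tau\,\ou\cdot\utau$, so one has to pass correctly to the inviscid limit of \emph{that} identity and isolate the purely Eulerian content. Concretely, the danger is a sign error or a missing factor in the term coupling $(\oo_0\cdot\na)\ou_0$ to the second fundamental form; getting this right requires writing the vorticity equation in a frame adapted to $\Ga$ and carefully handling the normal derivative of $\on$ (the Weingarten map). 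Once the correct first-order identity is in hand, everything else — choosing the data, invoking smooth local well-posedness, and the continuity argument — is routine. A secondary, more minor point is ensuring the chosen $\ou_0$ genuinely lies in the stated family and that the non-vanishing of $\psi'(0)$ is robust, i.e. holds for a whole explicit family rather than a single field, so that Theorem \ref{theointro} is stated with the advertised generality.
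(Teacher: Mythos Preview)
Your overall strategy --- compute $\psi(t)=\oo(t)\times\on$ on $\Ga$, note $\psi(0)=0$, show $\psi'(0)\ne 0$ for an explicit family of data, and conclude by continuity --- is exactly the paper's strategy. Where you diverge is in step (iii), and that divergence is precisely the ``main obstacle'' you anticipate; the paper sidesteps it entirely.

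You plan to work from the transport form $\pa_t\oo+(\ou\cdot\na)\oo=(\oo\cdot\na)\ou$ and then invoke \eqref{parec} and the Weingarten map to rewrite the tangential trace in curvature terms. But \eqref{parec} is not an evolution identity: it is the purely kinematic relation between the tangential stress and $\oo\times\on$ (it appears in the paper only in Remark~\ref{tolosa} to compare slip and Navier conditions), and it contributes nothing to computing $\pa_t(\oo\times\on)$. Worrying about ``passing to the inviscid limit of that identity'' is a red herring --- there is no viscous identity to limit here. The paper instead uses the equivalent curl form of the vorticity equation, $\pa_t\oo=\mathrm{curl}\,(\ou\times\oo)$, so that immediately
\[
\pa_t(\oo\times\on)\big|_{t=0}=\big[\mathrm{curl}\,(\ou_0\times\oo_0)\big]\times\on \quad\text{on }\Ga,
\]
an expression already written purely in terms of $\ou_0$ and $\on$, with no need to isolate curvature explicitly. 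The task is then reduced to exhibiting a smooth divergence-free $\ou_0$ satisfying \eqref{bcns} for which $\mathrm{curl}\,(\ou_0\times\oo_0)\times\on$ does not vanish identically on $\Ga$; the paper does this by a direct spherical-coordinate computation (the ansatz $\ou_0=-\frac{h(r)}{\sin\te}\,g_\vp\,\oee_\te+h(r)\,g_\te\,\oee_\vp$ with $h(1)+h'(1)=0$), which is morally your ``vector spherical harmonic'' idea. Your route through the second fundamental form could presumably be made to work, but it manufactures the very difficulty you flag; the paper's route eliminates it.
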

In particular, the following result holds.
\begin{theorem}
Let $\,\underline u_0\,$ be a given, fixed, initial data belonging
to the class referred in the above theorem \ref{theointro}. Denote
by $\,\ou^\nu\,$ the $\nu-$family of solutions to the Navier-Stokes
equations \eqref{1;4}, \eqref{bcns} with initial data $\,\underline
u_0\,$, and denote by $\,\underline u\,$ the solution of the Euler
equations
\eqref{1;4e}, \eqref{bceu} with initial data $\, \underline u_0\,$.\par%
There does not exist a $\,t_0 >\,0\,$ and exponents $\,q \geq \,1\,$
and $\,s>\,1+\,\frac{1}{q}\,$ such that $\,\ou^\nu\,$ converges to
$\,\underline u\,$ in $\,L^1(0,\,t_0; W^{s, \,q}(\Om)\,)\,.$ The
particular
case $L^1(0, t_0;\,W^{2,1}(\Om)\,)\,$ is also included in this statement.%
\label{theocorol}
\end{theorem}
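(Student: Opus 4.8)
The plan is to argue by contradiction, exploiting the single feature that distinguishes the Navier--Stokes solutions $\ou^\nu$ from the Euler solution $\ou$: by the very boundary condition \eqref{bcns}, each $\ou^\nu$ satisfies $\oo^\nu\times\on=0$ on $\Ga$, where $\oo^\nu=\operatorname{curl}\ou^\nu$, whereas by Theorem \ref{theointro} the limit $\ou$ does not. Convergence in $L^1(0,t_0;W^{s,q}(\Om))$ with $s>1+\frac1q$ is, by design, exactly strong enough to force the trace on $\Ga$ of $\oo^\nu$ to pass to the limit, and this is what produces the contradiction.

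First I would suppose, for contradiction, that there exist $t_0>0$, $q\ge1$ and $s>1+\frac1q$ (or the endpoint pair $s=2$, $q=1$) with $\ou^\nu\to\ou$ in $L^1(0,t_0;W^{s,q}(\Om))$. Since $\operatorname{curl}$ is a first--order differential operator it is bounded from $W^{s,q}(\Om)$ into $W^{s-1,q}(\Om)$, hence $\oo^\nu\to\oo$ in $L^1(0,t_0;W^{s-1,q}(\Om))$. The condition $s>1+\frac1q$ (together with the Gagliardo embedding $W^{1,1}(\Om)\hookrightarrow L^1(\Ga)$ in the borderline case $W^{2,1}$) is precisely what guarantees that the trace operator carries $W^{s-1,q}(\Om)$ continuously into $L^q(\Ga)$. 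Applying this trace operator, which commutes with the time variable and is continuous, therefore gives $\oo^\nu|_\Ga\to\oo|_\Ga$ in $L^1(0,t_0;L^q(\Ga))$.

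Next, since $\Om$ is the unit sphere, $\on$ is smooth on $\Ga$, so the map $f\mapsto f\times\on$ is bounded on $L^q(\Ga)$; hence $\oo^\nu|_\Ga\times\on\to\oo|_\Ga\times\on$ in $L^1(0,t_0;L^q(\Ga))$. But by \eqref{bcns} the left--hand side vanishes identically for every $\nu$, so the limit must vanish as well, i.e. $\oo(t)|_\Ga\times\on=0$ in $L^q(\Ga)$ for a.e. $t\in(0,t_0)$. Since the Euler solution $\ou(t)$ is indefinitely differentiable up to the boundary (Theorem \ref{theointro}), this forces $\oo(t)\times\on\equiv0$ on $\Ga$ for a.e. $t\in(0,t_0)$. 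Replacing $t_0$ by the minimum of itself and the time supplied by Theorem \ref{theointro}, we contradict the conclusion of that theorem, which states that $\oo(t)\times\on$ does not vanish identically on $\Ga$ for any $t$ in that interval.

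I expect the only delicate point to be the Sobolev-trace bookkeeping: one must verify that $s>1+\frac1q$ is exactly the threshold ensuring that $\oo=\operatorname{curl}\ou\in W^{s-1,q}(\Om)$ possesses a genuine $L^q(\Ga)$ trace depending continuously on $\ou$, and that the limiting case $W^{2,1}(\Om)$ is still covered, through the $q=1$ endpoint $W^{1,1}(\Om)\hookrightarrow L^1(\Ga)$ of the trace theorem. The remaining ingredients --- continuity of bounded linear maps under $L^1$-in-time convergence, smoothness of $\on$ on the sphere, and smoothness of the Euler solution up to $\Ga$ --- are routine.
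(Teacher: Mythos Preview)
Your proposal is correct and follows essentially the same approach as the paper: assume convergence, use the Sobolev trace theorem (with the threshold $s>1+\tfrac1q$, and the Gagliardo endpoint $W^{1,1}(\Om)\hookrightarrow L^1(\Ga)$ for the $W^{2,1}$ case) to pass the boundary condition $\oo^\nu\times\on=0$ to the limit, and then invoke Theorem~\ref{theointro} for the contradiction. The paper leaves the trace-theorem bookkeeping implicit, so your write-up is in fact more detailed than the original on this point.
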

\begin{remark}
{\rm Actually the convergence in the above theorem \ref{theocorol}
fails for any arbitrary subsequence, even under weaker convergence
hypotheses.}%
\label{remgen}
\end{remark}
\emph{Plan of the paper}: In section \ref{reduction} we show how to
turn the proofs of the above two theorems into the construction of a
suitable class of vector fields (called here ``counter-examples").
In section \ref{contraex} we explicitly construct the above vector
fields.
\section{Reduction to a functional problem in space variables}\label{reduction}%
In spite of the exceptionally strong convergence results in the case
of flat boundaries, at a certain point we became inclined to believe that a
strong inviscid limit result is false in general. This guess led us
to look for a counter-example, by reductio ad absurdum, as follows.
Let  $\underline u_0$ be a smooth divergence free initial data,
which satisfies the slip boundary conditions \eqref{bcns}, and
denote by $\,\ou^\nu\,$ and $\,\underline u\,$ the corresponding
solutions to the above Navier-Stokes and Euler boundary value
problems. Moreover, assume (per absurdum) that $\,\ou^\nu\,$
converges to $\, \underline u\,$ as $\,\nu $ goes to zero, with
respect to a specific $\tau-$topology, which (by assumption) is
sufficiently strong as to imply that the limit $\,\underline u(t)\,$
inherits the boundary condition $\,\oo^\nu \times \,\on=\,0\,$ near
$t=\,0\,$ (for instance, convergence in $L^1(0, t_0; W^{2,1})\,$).
This would imply that the Euler equations \eqref{1;4e} under the
classical boundary condition \eqref{bceu} necessarily enjoy the
following \emph{persistency property}: if a smooth initial data
satisfies the additional boundary condition $ \oo(0) \times
\,\on=\,0\,,$ then at least for small times, $\,\oo(t)\,$ must
verify this same property (we note that this was also considered as
an open problem). It follows that, in order to contradict the
possibility of the above $\tau-$convergence result, it is sufficient
to contradict the above persistency property for the Euler
equations. Next, by arguing as follows, we turn the proof of the
absence of the above persistency property into a problem concerning
only the space variables. External multiplication of the Euler
vorticity equation by the normal $\,\on\,$, point-wise on $\Gamma$,
leads to the equation
\begin{equation} \pa_t\,(\oo \times \, \on \,)-\,curl\,(\ou \times
\oo)\,\times \on =\,0\,.
\end{equation}
If the persistency property  holds, the first term in the above
equation must vanish identically on $\,\Gamma\,,$ at time
$\,t=\,0\,$. Hence the second term must verify the same property,
say
\be\label{cc}%
curl \,(\ou_0\times \oo_0)\times \on=0%
\ee%
on  $\,\Gamma\,$.
\par
Consequently, in order to prove that the above persistence property
does not hold and, a fortiori, that the above $\tau-$inviscid limit
result does not hold in general, it is sufficient to solve the
following problem.
\begin{problem}
To exhibit a smooth, divergence free vector field $\,\underline
u_0\,$, in a bounded, regular, open set $\Om$, which satisfies the
slip boundary conditions everywhere on $\,\Ga\,$, but does not
satisfy, somewhere on $\,\Ga\,,$ the boundary condition \eqref{cc}.
\end{problem}
Below, we succeed in constructing, \emph{globally} in $\,\Om \,,$ a
wide class of $C^{\infty}(\overline{\Om})\,$ vector fields for which
the above, negative, result holds. We assume $\Om$ to be the
$3$-dimensional unitary sphere and  display our vector field in
spherical coordinates. Once the vector fields are known, the
verification of the desired properties is straightforward.

\section{The counter-example}\label{contraex}%
In what follows we use spherical coordinates $(r,\te, \vp)$. For any
vector field $\ou$, we denote by $\,u_r$, $\,u_\te$ and $\,u_\vp$
the components of $\ou$ in the orthonormal, positively oriented,
local basis $\left(\oee_r,\,\oee_\te, \,\oee_\vp\right)$ . Just for
convenience, let us recall the expressions of $\nabla \,\cdot\, \ou$
and $\,\oo$ in this curvilinear coordinate system: \be\label{divu}
\begin{array}{ll}\displaystyle \vs1 \nabla \cdot
\ou=\,\frac{1}{r^2}\,\frac{\pa}{\pa r}\,(r^2\, u_r)+\,
\frac{1}{r\,\sin \te}\,\frac{\pa}{\pa \te}( u_\te\,\sin \te)+\,
\frac{1}{r\,\sin \te} \frac{\pa u_\vp}{\pa \vp};
\end{array}\ee
\be\label{rotu}
\begin{array}{ll}%
\displaystyle \vs1 curl\,\ou=\,\frac{1}{r\sin
\te}\,\left(\frac{\pa}{\pa \te} (u_\vp \,\sin \te)-\, \frac{\pa
u_\te}{\pa \vp}\,\right) \oee_r\\\hfill \displaystyle +
\frac{1}{r}\,   \left(\frac{1}{\sin \te}\,\frac{\pa u_r}{\pa \vp}
-\, \frac{\pa}{\pa r} (r\, u_\vp) \right) \oee_\te+ \frac{1}{r}\,
\left(\frac{\pa}{\pa r} (r\, u_\te)-\, \frac{\pa u_r}{\pa \te}
\right) \oee_\vp\,.%
\end{array}\ee
We also recall that, for a scalar field $f=f(r,\te,\vp)\,$,
\begin{equation}
\nabla \,f=\, \frac{\pa \,f}{\pa r}\,\oee_r+\,\frac{1}{r}\,\frac{\pa
\,f}{\pa \te}\,\oee_\te +\,\frac{1}{r\,\sin \te}\,\frac{\pa \,f}{\pa
\vp} \oee_\vp\,.%
\end{equation}
We consider the $3$-dimensional unitary sphere
$\Omega=\,\{x:\,r<\,1\,\}\,,$ and denote by $\Gamma$ its boundary.
The unit external normal  is denoted by $\on$. Clearly $\on=\oee_r$
on $\Gamma$.
\par
Let $h(r)$ be a $C^{\infty}\left([0,+\infty)\right)$ real function,
and $g(\te,\vp)$ be a $C^{\infty}([0,\pi]\times \R)$ real function,
$2\pi$-periodic on $\vp$. Just for convenience, we assume that
$h(r)$ vanishes in a neighborhood of $r=0$ and $g(\te,\vp)$ vanishes
for $\te$ in a neighborhood of $\te=0$ and $\te=\pi$ (and arbitrary
$\vp$). Set
$$G(\te,\vp)= \frac{\pa}{\pa \te}\left( \sin\te\frac{\pa
g}{\pa\te}\right)+\frac{1}{\sin \te}\frac{\pa^2 g}{\pa\vp^2}.$$

\begin{theorem}
Let $\ou$ be the vector field \be\label{exu}
\ou=\,-\,\frac{h(r)}{\sin \te}\,\frac{\pa g}{\pa \vp}\, \underline
e_{\te}+ h(r)\,\frac{\partial g}{\pa \te}\, \oee_{\vp}\,.\ee Then
the following results hold:
\begin{itemize}
\item[i)] $\nabla \cdot \ou=0\,$ in $\,\Om\,,$ $\ou\cdot \on=0\,$ on
$\Gamma$.
\item[ii)] If $\,h(1)+h'(1)=0\,$, then $\,\oo\times \on=0\,$ on $\,\Gamma$.
\item[iii)] If $\,h(1)+h'(1)=0\,$, with $h(1)\not=0$, and if
\be\label{a1}\frac{\pa g}{\pa \vp}\,\not=\,0\quad \mbox{ and }\quad
G(\te,\vp)\not=0 \ee at a point $P$ on $\Gamma$, then $\left[\,curl
(\ou\times \oo)\right]_\te\,\not=\,0$ in a neighborhood of $P$.
Similarly if $\,h(1)+h'(1)=0\,$, with $h(1)\not=0$ and if
\be\label{a2} \frac{\pa g}{\pa \te}\,\not=\,0\quad \mbox{ and }\quad
G(\te,\vp)\not=0 \ee at a point $P$ on $\Gamma$, then $\left[\,curl
(\ou\times \oo)\,\right]_\vp\,\not=\,0$ in a neighborhood of $P$.
\end{itemize}
\label{contra}
\end{theorem}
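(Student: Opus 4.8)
The plan is to verify all three items by direct computation in the spherical frame, using only the identities \eqref{divu}--\eqref{rotu}; throughout I write $(rh)'=h+rh'$ and abbreviate $g_\te=\pa g/\pa\te$, $g_\vp=\pa g/\pa\vp$. Part (i) is quick: the field \eqref{exu} has no radial component, so the first term of \eqref{divu} is absent, and since $u_\te\sin\te=-h(r)g_\vp$ and $u_\vp=h(r)g_\te$, the two surviving terms $\frac1{r\sin\te}\pa_\te(u_\te\sin\te)$ and $\frac1{r\sin\te}\pa_\vp u_\vp$ are opposite and cancel, while $\ou\cdot\on=u_r=0$ on $\Ga$ is immediate. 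For part (ii) I would substitute $u_r=0$, $u_\te=-\frac{h}{\sin\te}g_\vp$, $u_\vp=hg_\te$ into \eqref{rotu} — the combination $G(\te,\vp)$ reappearing on its own in the radial component — obtaining
\[
\oo_r=\frac{h(r)}{r\sin\te}\,G,\qquad \oo_\te=-\frac{(rh)'}{r}\,g_\te,\qquad \oo_\vp=-\frac{(rh)'}{r\sin\te}\,g_\vp .
\]
On $\Ga$ one has $r=1$ and $(rh)'|_{r=1}=h(1)+h'(1)=0$, hence $\oo_\te=\oo_\vp=0$ there; thus $\oo$ is parallel to $\on=\oee_r$ on $\Ga$, which is exactly $\oo\times\on=0$.

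The substance lies in part (iii). I would first compute $V:=\ou\times\oo$ componentwise from the formulas above and the orientation of the frame:
\[
V_r=u_\te\oo_\vp-u_\vp\oo_\te=\frac{h(rh)'}{r}\left(\frac{g_\vp^2}{\sin^2\te}+g_\te^2\right),\qquad V_\te=u_\vp\oo_r=\frac{h^2}{r\sin\te}\,g_\te\,G,\qquad V_\vp=-u_\te\oo_r=\frac{h^2}{r\sin^2\te}\,g_\vp\,G .
\]
Then I would apply \eqref{rotu} to $V$ and retain its two tangential components, $[curl\,V]_\te=\frac1r(\frac1{\sin\te}\pa_\vp V_r-\pa_r(rV_\vp))$ and $[curl\,V]_\vp=\frac1r(\pa_r(rV_\te)-\pa_\te V_r)$. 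The decisive point is a piece of bookkeeping: $\pa_\vp V_r$ and $\pa_\te V_r$ both keep the overall factor $(rh)'$ and therefore vanish on $\Ga$; on the other hand $rV_\vp=\frac{h^2}{\sin^2\te}g_\vp G$ and $rV_\te=\frac{h^2}{\sin\te}g_\te G$ depend on $r$ only through $h^2$, so that $\pa_r(rV_\vp)=\frac{2hh'}{\sin^2\te}g_\vp G$ and $\pa_r(rV_\te)=\frac{2hh'}{\sin\te}g_\te G$ survive.

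Specializing to $r=1$ and using $h'(1)=-h(1)$, one is left on $\Ga$ with $[curl\,V]_\te=\frac{2h(1)^2}{\sin^2\te}\,g_\vp\,G$ and $[curl\,V]_\vp=-\frac{2h(1)^2}{\sin\te}\,g_\te\,G$. Under hypothesis \eqref{a1} the first of these is nonzero at $P$; under \eqref{a2} the second is. Since $h$, $g$ and their derivatives are continuous — and, carrying the same computation out at arbitrary $r$ near $1$, the displayed quantities extend to continuous functions on $\overline{\Om}$ — the relevant component of $curl(\ou\times\oo)$ remains nonzero throughout a full, three-dimensional neighborhood of $P$, which is the assertion of (iii).

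The only genuine obstacle I anticipate is the organization of the $curl\,curl$ step in (iii): one must expand \eqref{rotu} applied to $V$ with care, correctly recognize which terms carry the factor $(rh)'$ (and so die on $\Ga$), and check that the surviving terms do not secretly cancel, so that the clean expressions above actually come out. Apart from that, the standing assumptions that $h$ vanishes near $r=0$ and $g$ near $\te=0,\pi$ play no role in the algebraic identities themselves; their sole purpose is to make $\ou$ and all the quantities computed above genuinely $C^{\infty}(\overline{\Om})$ despite the coordinate singularities of the spherical frame, and they should be invoked whenever that global smoothness is used.
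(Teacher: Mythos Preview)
Your proposal is correct and follows essentially the same route as the paper: the same explicit formulas for $\oo$ and for $V=\ou\times\oo$, the same use of $(rh)'|_{r=1}=0$ to kill the $V_r$-contributions to the tangential components of $curl\,V$, and the same final expressions $[curl\,V]_\te=-\frac{2}{\sin^2\te}h(1)h'(1)\,g_\vp\,G$ and its $\vp$-analogue on $\Ga$, after which continuity gives the neighborhood statement. The only cosmetic difference is that the paper phrases the vanishing of the $V_r$-terms as $v_r=\pa_\te v_r=\pa_\vp v_r=0$ on $\Ga$ (a consequence of $\omega_\te=\omega_\vp=0$ there), whereas you track the factor $(rh)'$ directly; the content is identical.
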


\begin{proof}
Claims in $i)$ follow by a straightforward calculation, using
\eqref{divu} and recalling that $\on=\,\oee_r$  on $\Gamma$.
\vskip0.2cm By using \eqref{rotu}, and by observing that \eqref{exu}
yields $\displaystyle u_r=\frac{\pa u_r}{\pa \te}=\frac{\pa u_r}{\pa
\vp}=0$ in $\Omega$, we show that $\oo$ is given in
$\overline{\Omega}\,$ by
$$\begin{array}{ll}
\displaystyle\vs1
\oo=\omega_r\,\oee_r+\omega_\te\,\oee_\te+\omega_\vp\,\oee_\vp\\
\hfill=\displaystyle\frac{h(r)}{r\sin \te}\,G(\te,\vp)\,
\oee_r-\,\frac1r\,\frac{\pa}{\pa r}\,(r\,h(r))\,\frac{\pa g}{\pa
\te}\, \oee_\te- \frac{1}{r\sin \te}\,\frac{\pa}{\pa
r}\,(r\,h(r))\,\frac{\pa g}{\pa \vp}\, \oee_\vp\,.\end{array}$$ In
particular, on $\Gamma$ the vector field $\oo\times \on$ is given by
$$\oo\times \on=\,\omega_\vp\,\oee_\te-\omega_\te\,\oee_\vp=\,-\,\frac{1}{r\sin \te}\,\frac{\pa}{\pa
r}\,(r\,h(r))\,\frac{\pa g}{\pa \vp}\,\oee_\te +\,
\frac1r\,\frac{\pa}{\pa r}\,(r\,h(r))\,\frac{\pa g}{\pa
\te}\,\,\oee_\vp\,.$$ Therefore, if $\frac{\pa}{\pa
r}\,(r\,h(r))|_{r=1}=0$, we get $\oo\times \on=0$ on $\Gamma$. This
proves ii).\vskip0.2cm Let us pass to the last point $iii)$. From
the previous steps, we have
\be\label{uob}u_r=\omega_\te=\omega_\vp=0\ \mbox{ on }\ \Gamma\,.\ee

Set $\ovv=\ou\times \oo$. Since $\,u_r=\,0\,$ in $\Omega$, $\ovv$ is
given by \be\label{vg}
\ovv=(u_\te\,\omega_\vp\,-\,u_\vp\,\omega_\te)\, \oee_r+
\,u_\vp\,\omega_r\, \oee_\te -\,u_\te\,\omega_r\, \oee_\vp.\ee Note
that $\oo\times \on=0$ on $\Gamma$ implies that $\ovv$ is tangential
to $\Gamma$. Hence, \be\label{vong} v_r= \frac{\pa v_r}{\pa
\te}=\frac{\pa v_r}{\pa \vp}=0\quad \mbox{ on }\, \Gamma.\ee
Further, from \eqref{uob}, it follows
$$v_\te=u_\vp\,\omega_r\ \mbox{ and \ }  v_\vp=-u_\te\,\omega_r,
\quad\mbox{ on }\Gamma\,.
$$  By recalling \eqref{rotu} and then
using \eqref{uob}, \eqref{vg} and \eqref{vong}, we show that the
$\te$ and the $\vp$ components of $curl \,\ovv$ on $\Gamma$ are
given by
$$
\left[\,curl \,\ovv\,\right]_\te= -\,\frac{1}{r}\,\frac{\pa }{\pa
r}\,(r\,v_\vp)
$$
and
$$\left[\,curl \,\ovv\,\right]_\vp= \,\frac{1}{r}\,\frac{\pa }{\pa
r}\,(r\,v_\te)\,,
$$
respectively.\par%
Straightforward calculations lead to
$$\left[\,curl
\,\ovv\,\right]_\te= -\frac{2}{\sin^2\te} \,h(1)h'(1)\frac{\pa
g}{\pa \vp}\, G (\te,\vp)\ \mbox{ on }\Gamma.$$ Therefore, if
$h(1)\not=0$ (hence $h'(1)\not=0$ by $\,h(1)+h'(1)=0\,$) and if
\eqref{a1} is satisfied at some point $P\in \Gamma$, it follows that
$\left[\,curl \,\ovv\,\right]_\te\not=0$ at $P$. Consequently this
last quantity does not vanish in a neighborhood of $P$. The same
arguments applied on the $\vp$-component of $curl \,\ovv$ on
$\Gamma$ ensure that under condition \eqref{a2} at some point $P$,
$\left[\,curl \,\ovv\,\right]_\vp \not=0$ at $P$.
\end{proof}

{\it Acknowledgments.}\; The work of the second author was supported
by INdAM (Istituto Nazionale di Alta Matematica) through a Post-Doc
Research Fellowship at Dipartimento di Matematica Applicata,
University of Pisa.

\end{document}